\documentclass[12pt,a4paper,reqno]{amsart}
\usepackage{geometry}
\geometry{top=3cm, bottom=3cm, left=2.5cm, right=2.5cm}
\usepackage{amsmath,amssymb}
\usepackage{amsfonts}
\usepackage{eucal}
\usepackage{amsthm}
\usepackage{mathrsfs}
\numberwithin{equation}{section}
\usepackage[pagewise]{lineno}


\def\a{\alpha}
\def\b{\beta}

\def\d{\delta}
\def\e{\varepsilon}
\def\f{\varphi}
\def\g{\psi}

\def\x{\xi}

\def\re{\mathbb{R}}

\def\pa{\partial}

\renewcommand{\Re}{\text{{\rm Re}\;}}
\renewcommand{\Im}{\text{{\rm Im}\;}}

\newcommand{\supp}{\text{{\rm supp}\;}}


\newtheorem{thm}{Theorem}[section]
\newtheorem{lem}[thm]{Lemma}
\newtheorem{prop}[thm]{Proposition}
\newtheorem{cor}[thm]{Corollary}

\theoremstyle{definition}

\newtheorem{ass}{Assumption}

\newtheorem{ex}{Example}

\theoremstyle{remark}
\newtheorem{rem}[thm]{Remark}




\title[]%
{Smoothness of the fundamental solution of Schr\"odinger equations with mild trapping}

\author{Kouichi Taira}
\address{Department of Mathematical Sciences, Ritsumeikan University, 1-1-1 NojiHigashi, Kusatsu, 525-8577 Japan}
\email{ktaira@fc.ritsumei.ac.jp}
\date{}

\begin{document}
\maketitle

\begin{abstract}
In this short note, smoothness of the fundamental solution of Schr\"odinger equations on a complete manifold is studied. It is shown that
\begin{itemize}
\item the fundamental solution is smooth under ``mild" trapping conditions;
\item there is a Riemannian manifold which is equal to Euclidean space outside a compact set such that the fundamental solution is not smooth.
\end{itemize}

\end{abstract}


\section{Introduction}

\subsection{Background}

Let $(M,g)$ be a complete (non-compact) Riemannian manifold. As is well-known, the Laplace operator $-\Delta_g$ is essentially self-adjoint on $C_c^{\infty}(M)$. We denote the integral kernel of $e^{it\Delta_g}$ by $E_t$:
\begin{align*} 
E_t(x,y):=e^{it\Delta_g}(x,y)\quad t\in \re,\quad (x,y)\in M\times M.
\end{align*}
We say that $E_t$ is the fundamental solution of the Schr\"odinger equation.

It is known that due to these dispersive properties, Schr\"odinger equations have smoothing properties such as the local smoothing effect, which states that the map $L^2(M)\ni u_0\to e^{it\Delta_g}u_0\in L^2_{loc}(\re_t; H_{loc}^{\frac{1}{2}}(M))$ is continuous when $(M,g)$ is Euclidean space.
In general, Doi showed that the local smoothing effect holds if and only if the trapped set of the geodesic flow is empty \cite{D1}. For recent progress of the local smoothing effect and semiclassical resolvent estimates, see the survey \cite{W}.

In this short note, we study another smoothing property, that is, the smoothness of the fundamental solution. Smoothness of an evolution equation does not hold in general. Indeed, the wave propagators $\cos t\sqrt{\Delta_g}, \frac{\sin t\sqrt{\Delta_g}}{\sqrt{-\Delta_g}}$ are not smooth at all due to the lack of the dispersive properties.
As is shown in \cite[Theorem 1.5]{D2}, for each $t\neq 0$, the fundamental solution $E_t$ for the Schr\"odinger equation is smooth under the non-trapping condition. More precisely, the wavefront set of $E_t$ is contained in a subset described by the forward/backward trapped sets and the zero section of $T^*M$. However, as far as the author is aware of, it is in general not known whether $E_t$ is smooth in the presence of trapped trajectories of the geodesic flow. A purpose of this note is to show that
\begin{itemize}
\item if the trapped set of the geodesic flow is ``mild" enough (for example, if the trapped set is hyperbolic with the negative topological pressure), then $E_t$ is smooth;
\item there is a Riemmanian manifold $(M,g)$, which is equal to Euclidean space outside a compact set such that $E_t$ is not smooth.
\end{itemize}

When $M$ is compact, the smoothness of the fundamental solution is studied for the sphere (\cite{KR} or \cite{T}) or the interval. On the circle, it is shown in \cite{KR} that $E_t$ is not smooth and the precise regularity of $E_t$ is derived. In \cite{T}, an explicit formula for $E_t$ on the sphere is given at rational times.
 In \cite[Remark 4]{Y}, the author points out that the fundamental solution is nowhere integrable (in particular, not smooth) for the Dirichlet Laplacian on the interval $[0,\pi]$. In general, the fundamental solution on compact Riemannian manifolds is not smooth due to the existence of eigenfunctions with high-frequency (see Remark \ref{compnotsm}). From this observation, even when $M$ is non-compact, it is expected that a strongly scared quasimode (an approximate eigenfunction) disturbs the smoothness of $E_t$. This is the key idea of the proof of our main Theorem \ref{mainth2}.

For a Schr\"odinger operator $-\Delta +V(x)$ with a real-valued smooth potential $V$ on $\re^n$, the smoothness of the fundamental solution $E_t$ strongly depends on the growth rate of $V$. If $V$ is at most quadratic and if $t\neq 0$ is sufficiently small, then $E_t$ is smooth (\cite{F}). Moreover, if $V$ is sub-quadratic, then $E_t$ is smooth (\cite[Theorem 1.1]{Y}) for all time $t\neq 0$. On the other hand, it is shown in \cite[Theorem 1.2]{Y} that $E_t(x,y)$ is nowhere $C^1$ with respect to $(t,x,y)$ if $V$ is super-quadratic and if the dimension is one. In the super-quadratic case with higher dimensions, the problem has not been solved so far.

\subsection{Main results}

\begin{ass}\label{assresfree}
Suppose that there exist $\e_0,h_0,C_0>0$ and $\b<2$ such that for all $0<h<h_0$ and $\chi\in C_c^{\infty}(M)$, the outgoing resolvent $\chi(-h^2\Delta_g-z)^{-1}\chi\in B(L^2(M))$ for $\Im z>0$ has an analytic extension to a region
\begin{align*}
D_h:=\{z\in \mathbb{C}\mid \Re z\in [1-\e_0,1+\e_0],\,\, \Im z\geq -C_0h^{\b}\}.
\end{align*}
Moreover, there exists $\a>0$ such that the following holds: For each $\chi\in C_c^{\infty}(M)$, there exists $C_{\chi}>0$ such that
\begin{align*}
\|\chi (-h^2\Delta_g-z)^{-1} \chi\|\leq C_{\chi}h^{-\a}
\end{align*}
for all $z\in D_h$.

\end{ass}

In \cite{I}, the author studies the semiclassical behavior of distorted plane waves with hyperbolic trapping under existence of a resonance free strip similar to Assumption \ref{assresfree}. This result is an extension of his earlier results to this setting.

Now we define the trapped set of the geodesic flow. We denote the geodesic flow by $\f_t$. The trapped set is defined by
\begin{align*}
K:=\{(x,\x)\in T^*M\mid |\x|_g=1,\,\, \f_t(x,\x)\,\, \text{remains in a bounded set for all $t\in \re$} \}.
\end{align*}

\begin{ex}
Suppose a complete Riemmanian manifold $(M,g)$ is isometric outside a compact set
 to copies of Euclidean space or even asymptotically hyperbolic spaces (for these definition, see \cite[\S 4.1, 4.2]{DV} and \cite[\S 3.1]{NZ}. Actually, we can deal with more general asymptotically conic spaces, see \cite[\S 3.2]{NZ}.)
Moreover, suppose one of the following conditions holds (see \cite{NZ}, \cite{BD}, \cite{NZ2} and \cite{C} respectively):
\begin{itemize}
\item[$(1)$] $K$ is a hyperbolic trapped set and the topological pressure of the geodesic flow satisfies $\mathcal{P}(\frac{1}{2})<0$ (for the definition of the topological pressure, see \cite[before Theorem 3]{NZ} with $p(x,\x)=|\x|_g^2$ and $E=1$);

\item[$(2)$] there is a geodesically convex neighborhood $U$ of $K$ such that $U$ is isometric to a geodesically convex neighborhood of the trapped set in a convex co-compact hyperbolic surface;

\item[$(3)$] $K$ is a normally hyperbolic trapped set in the sense of \cite{NZ2};

\item[$(4)$] $(M,g)$ is a warped product manifold and $K$ is a disjoint union of unstable finitely degenerate trapped sets (see \cite{CW} or \cite[\S 3.2]{C}) and finitely degenerate inflection transmission trapped sets (see \cite{CM} or \cite[\S 3.3]{C}).
\end{itemize}
Then Assumption \ref{assresfree} is satisfied. This is checked by the results in \cite{NZ}, \cite{BD}, \cite{NZ2} or \cite{C} and by the gluing of semiclassical resolvent estimates \cite[Appendix A]{C} or \cite[Theorem 2.1]{DV}. See also \cite[\S 6]{DV}.
Moreover, we can take $\b=1$ in the cases $(1), (2)$ or $(3)$.
\end{ex}

Now we state our first main theorem.

\begin{thm}\label{mainth1}
Under Assumption \ref{assresfree}, we have $E_t\in C^{\infty}(M\times M)$ for each $t\neq 0$.
\end{thm}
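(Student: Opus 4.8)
The plan is to reduce the smoothness of $E_t$ to a resolvent estimate via the spectral theorem and then exploit the resonance-free strip from Assumption A. First I would write, for $t\neq 0$ and $\chi\in C_c^\infty(M)$,
\begin{align*}
\chi e^{it\Delta_g}\chi = \frac{1}{2\pi i}\int_0^\infty e^{-it\lambda}\,\chi\bigl((-\Delta_g-\lambda-i0)^{-1}-(-\Delta_g-\lambda+i0)^{-1}\bigr)\chi\,d\lambda
\end{align*}
using Stone's formula, and observe that since $0$ is not an eigenvalue and low energies contribute a smooth kernel (the operator is local in space away from the diagonal and elliptic regularity handles the diagonal, or one cites the known non-trapping/elliptic behavior near $\lambda=0$), the only issue is the high-energy part $\lambda\to\infty$. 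Rescaling $\lambda = h^{-2}$, the question becomes whether the semiclassical resolvent $\chi(-h^2\Delta_g - z)^{-1}\chi$, integrated against $e^{-it h^{-2}}$ over $\Re z\approx 1$, produces a kernel that gains arbitrarily many derivatives as $h\to 0$.

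The key step is a contour deformation in the spectral parameter. Since by Assumption A the cutoff resolvent extends holomorphically to the strip $\Im z \geq -C_0 h^\beta$ with polynomial bound $h^{-\alpha}$, I would deform the contour of integration in $z$ (equivalently in $\lambda$) downward into the lower half-plane by an amount $\sim h^\beta$. On the deformed contour, $|e^{-it h^{-2} z}| = e^{-t h^{-2}\Im z} \lesssim e^{-c|t| h^{\beta-2}}$, and since $\beta<2$ we have $h^{\beta-2}\to\infty$, so this factor decays faster than any power of $h$. Combined with the polynomial loss $h^{-\alpha}$ from the resolvent bound and the powers of $h^{-2}$ introduced by differentiating $e^{it\Delta_g}$ in $x$ and $y$ (each derivative costs an $h^{-1}$ after one converts $-\Delta_g$-derivatives using the equation, or one simply bounds $\|\langle D\rangle^N \chi e^{it\Delta_g}\chi \langle D\rangle^N\|$), the total contribution from each dyadic energy shell $\lambda\sim h^{-2}$ is bounded by $h^{-N'}e^{-c|t|h^{\beta-2}}$ for some $N'$ depending on the number of derivatives, which is summable over dyadic shells and tends to $0$. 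This shows $\langle D\rangle^N \chi E_t \chi \langle D\rangle^N$ has a bounded (indeed smooth) kernel for every $N$, and since $\chi$ is arbitrary, $E_t\in C^\infty(M\times M)$.

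The main obstacle I expect is handling the derivatives carefully and making the deformation rigorous uniformly in the dyadic energy scales. One must justify that the holomorphic extension and the bound in Assumption A are compatible with inserting the oscillatory factor and with commuting the cutoffs past powers of $\langle D\rangle$; concretely, one needs that $\chi(-h^2\Delta_g-z)^{-1}\chi$ remains bounded (with at most polynomial-in-$h$ loss) also when sandwiched between extra cutoffs and derivatives, which follows from elliptic parametrix constructions away from the characteristic set together with Assumption A on the relevant compact energy window. A secondary technical point is the low- and intermediate-energy regime and the contribution near $\lambda=0$: here there is no oscillation to exploit, so one instead uses that for fixed $\lambda$ in a compact set the kernel of $\chi(-\Delta_g-\lambda\mp i0)^{-1}\chi$ is already smooth off the diagonal and, when paired with the smooth density $e^{-it\lambda}$ and integrated, yields a smooth contribution — this is standard and I would only sketch it. Once these pieces are in place, summation over dyadic shells and the arbitrariness of $N$ and $\chi$ give the claim.
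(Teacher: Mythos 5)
Your proposal follows essentially the same route as the paper: reduce to a semiclassical cutoff-propagator estimate on each energy shell, deform the contour into the resonance-free strip of Assumption~\ref{assresfree}, and use $\beta<2$ so that the factor $e^{-c|t|h^{\beta-2}}$ beats every polynomial loss in $h$. The paper implements the contour deformation through the Helffer--Sj\"ostrand formula (almost analytic continuation of the energy cutoff $\psi$) rather than Stone's formula with dyadic shells, and then concludes by a soft lemma stating that an operator mapping $H^{-N}\to H^N$ continuously for all $N$ has smooth Schwartz kernel, which avoids the explicit derivative bookkeeping you sketch.
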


\begin{rem}
We note that our result (Theorem \ref{mainth1}) does not contradict the lack of smoothing effects which is shown in \cite[Theorem 1.2]{D2}. In fact, the proof in \cite[Theorem 1.2]{D2} shows that the smoothing effects (\cite[$(i)_r, (ii)_r$]{D2}) break when the time $|t|$ is small enough depending on frequency $1/h$. On the other hand, the time $t\neq 0$ is fixed in our setting.
\end{rem}

It is well-known that the size of the resonance free strip is closely related to how ``mild" the trapped set is. In fact, the size of the resonance free strip $C_0h^{\b}$ corresponds to the inverse of the lifetime of a particle. Hence, the bigger the latter quantity is, the ''milder" the trapped set is. Thus, Theorem \ref{mainth1} says that when the trapped set is ''mild" enough, then the fundamental solution becomes smooth.

In \cite{D2}, the smoothness of $E_t$ under the non-trapping condition is a consequence of the following smoothing effects (\cite[$(ii)_r$]{D2}): If $u_0$ is a compactly supported distribution and $A\in \Psi_{cpt}^{r}$ (where $\Psi_{cpt}^r$ is a class of compactly supported pseudodifferential operators) with $r\geq 0$, then
\begin{align*}
L^2_{cpt}(M)\ni u_0\mapsto |t|^rAe^{it\Delta_g}u_0\in C(\re; H^{r}(M))
\end{align*}
is continuous.
However, as is shown in \cite[Theorem 1.2]{D2}, this is false when the trapped set is not empty. Here, we use the existence of a resonance free strip under a ``mild" trapping condition in order to prove the smoothness. The result for the non-trapping case is also reproved by our method (Theorem \ref{mainth1}) and the existence of a resonance free strip \cite{M}.

Next, we state our negative result.

\begin{thm}\label{mainth2}
There exist a Riemmanian manifold $(M,g)$ which is equal to Euclidean space $\re^n$ outside a compact set such that $E_t\notin C^{\infty}(M\times M)$ for all $t\in \re$.
\end{thm}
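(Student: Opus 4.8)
The plan is to build $(M,g)$ by gluing a "bad" compact region onto Euclidean space so that the geodesic flow has a trapped set supporting a sequence of quasimodes with exponentially small error, and then to show that such quasimodes force $E_t$ to fail to be smooth for every $t$. The starting point is the heuristic already flagged in the introduction: on a compact manifold the fundamental solution is not smooth because of high-frequency eigenfunctions (Remark \ref{compnotsm}), and a strongly scarred quasimode on a non-compact manifold plays the same role. Concretely, I would take a surface (or warped product) containing an unstable closed geodesic $\gamma$ whose linearized Poincaré map is elliptic, i.e. a \emph{stable} (elliptic) trapped orbit; by the standard Birkhoff normal form / quasimode construction (going back to Ralston, Babič--Buldyrev Gaussian beams), there exist $h_k\downarrow 0$ and $u_k\in C_c^\infty(M)$, microlocally concentrated on $\gamma$, with $\|u_k\|_{L^2}=1$ and
\begin{align*}
\|(-h_k^2\Delta_g - 1)u_k\|_{L^2} \leq C_N h_k^N \quad \text{for every } N,
\end{align*}
and in fact one can arrange the error to be $O(e^{-c/h_k})$ using Gevrey/analytic Gaussian beam constructions if the metric is chosen analytic near $\gamma$. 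It is a classical fact that such an elliptic orbit exists for suitable compactly supported perturbations of the Euclidean metric, so $(M,g)$ can be taken Euclidean outside a compact set.

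The second step is to convert the quasimode bound into a statement about $e^{it\Delta_g}$. Fix $t\neq 0$ and a cutoff $\chi\in C_c^\infty(M)$ equal to $1$ near the projection of $\gamma$. Writing $-\Delta_g = h_k^{-2}(-h_k^2\Delta_g)$ and using that $u_k$ is an $O(h_k^\infty)$-quasimode at energy $1$, one has $e^{it\Delta_g}u_k = e^{it h_k^{-2}}u_k + r_k$ where, by Duhamel and the spectral theorem,
\begin{align*}
\|r_k\|_{L^2} = \Big\| \int_0^t e^{i(t-s)\Delta_g}\, i h_k^{-2}(-h_k^2\Delta_g - 1)e^{is h_k^{-2}} u_k \, ds \Big\|_{L^2} \leq |t|\, h_k^{-2}\,\|(-h_k^2\Delta_g-1)u_k\|_{L^2} = O(h_k^\infty).
\end{align*}
Hence $\chi e^{it\Delta_g}(\chi u_k)$ is, up to an $O(h_k^\infty)$ error in $L^2$, equal to $e^{ith_k^{-2}}\chi u_k$, which has $H^s$ norm of size $\sim h_k^{-s}$. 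Pairing against $\chi u_k$ gives $|\langle \chi e^{it\Delta_g}\chi u_k, h_k^{-s} u_k\rangle|$ bounded below, while if $E_t$ were smooth then $\chi e^{it\Delta_g}\chi$ would be a smoothing operator and this pairing would be $O(h_k^\infty)$ — a contradiction. (Equivalently: $\chi e^{it\Delta_g}\chi$ cannot be bounded from $H^{-s}$ to $H^{s}$ for all $s$, hence is not smoothing, hence $E_t$ is not $C^\infty$.) One has to be slightly careful that $\chi u_k$ is still a good quasimode — this is immediate since $u_k$ is already supported near $\gamma$ — and that the $O(h_k^\infty)$ error survives the cutoff, which it does because $\chi$ is bounded on all Sobolev spaces.

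The main obstacle, and where the real content lies, is the \emph{uniformity in $t$}: Theorem \ref{mainth2} asserts non-smoothness for \emph{all} $t\in\re$ simultaneously, including $t=0$ (trivial) and arbitrarily large $t$. The Duhamel estimate above costs a factor $|t|h_k^{-2}$, which is still $O(h_k^\infty)$ for each fixed $t$, so for any fixed $t\neq 0$ the argument goes through verbatim; the point worth emphasizing is simply that the construction of the quasimode sequence does not depend on $t$, so the same sequence $u_k$ witnesses non-smoothness of $E_t$ for every $t$. I would also remark that this is sharper than the generic compact picture: here the obstruction is localized near a single elliptic orbit, which is why the manifold can be taken to be a compactly supported perturbation of $\re^n$. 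If one instead wants an explicit contradiction with a \emph{wavefront set} statement, one notes that the above shows $(t;x,\xi;x,\xi)$ (or a pair of points on $\gamma$) lies in $\WF(E_t)$ for a dense set of codirections along $\gamma$, but for the bare statement "$E_t\notin C^\infty$" the $L^2$-pairing argument suffices.
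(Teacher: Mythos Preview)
Your proposal is correct and follows essentially the same route as the paper: the paper likewise constructs $O(h^\infty)$ quasimodes concentrated on an elliptic closed geodesic (Lemma~\ref{quasiex}, citing the same Gaussian-beam type constructions) and then uses the identical Duhamel comparison $e^{it\Delta_g}u_k=e^{-it/h_k^2}u_k+O_{L^2}(h_k^\infty)$ to show that $\chi e^{it\Delta_g}\chi$ cannot be smoothing (Proposition~\ref{quasiimpnotsm}). The only cosmetic differences are that the paper extracts the final $H^m$ lower bound via an explicit spectral cutoff $\varphi(-h^2\Delta_g)$ (which cleanly converts the $L^2$ approximation into an $H^m$ one) rather than your pairing argument, and observes that $o(h^2)$ quasimode error already suffices.
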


The proof of Theorem \ref{mainth2} depends on the well-known fact that an elliptic closed geodesic admits the strongly scared quasimode on an asymptotically conic manifold (\cite{C} or \cite{RT}).


\noindent
\textbf{Acknowledgment.}  
The author would like to thank Hans Christianson, Wataru Nakahashi and St\'ephane Nonnenmacher for helpful discussions.

\section{Smoothness of the fundamental solution}\label{sec2}

In this section, we prove Theorem \ref{mainth1}.
We may assume $t>0$.
Let $R_{\pm}(z)$ be a meromorphic continuation of the outgoing/incoming $(-h^2\Delta_g-z)^{-1}$ for $\pm \Im z>0$.

\begin{prop}\label{smprop}
Suppose that Assumption \ref{assresfree} is fulfilled. 
Let $\chi\in C_c^{\infty}(M)$ and $\g\in C_c^{\infty}((0,\infty))$. Then, for $h\in (0,h_0]$ and $t\geq 0$, we have
\begin{align*}
\|\chi e^{it\Delta_g}\g(-h^2\Delta_g)\chi\|_{L^2\to L^2}\leq&Ch^{-\a}e^{-C_0h^{\b-2}t}+O(h^{\infty}),
\end{align*}
where $O(h^{\infty})$ is uniformly in $t\geq 0$. In particular, for fixed $t>0$, we have
\begin{align*}
\|\chi e^{it\Delta_g}\g(-h^2\Delta_g)\chi\|_{L^2\to L^2}=O(h^{\infty})
\end{align*}
since $\b<2$.
\end{prop}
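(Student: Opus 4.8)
The plan is to express the spatially and spectrally localised propagator through the boundary values of the resolvent on the continuous spectrum and then to push the contour of integration into the lower half plane, where the resonance free strip of Assumption~\ref{assresfree} lets one gain the factor $e^{-C_0h^{\b-2}t}$. Write $P_h=-h^2\Delta_g$, so that $e^{it\Delta_g}=e^{-i(t/h^2)P_h}$ and hence $e^{it\Delta_g}\g(P_h)=f(P_h)$ with $f(\l)=e^{-it\l/h^2}\g(\l)$; by Stone's formula, for fixed $h\in(0,h_0]$ and $t\ge0$,
\begin{equation*}
\chi e^{it\Delta_g}\g(P_h)\chi=\frac{1}{2\pi i}\int_{\re}e^{-it\l/h^2}\g(\l)\,\chi\big(R_+(\l+i0)-R_-(\l-i0)\big)\chi\,d\l ,
\end{equation*}
where $R_\pm(z)=(P_h-z)^{-1}$ for $\pm\Im z>0$ and $\l$ runs over the compact set $\supp\g$. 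The key structural point is that $\g$ enters here as a scalar weight, so the operator under the integral is always $\chi R_\pm(z)\chi$ with the \emph{same} cut-off on both sides, which is exactly the object controlled by Assumption~\ref{assresfree}; thus no weighted resolvent estimate is needed. Moreover, from the scaling identity $-h^2\Delta_g-z=E\big(-(h/\sqrt E)^2\Delta_g-z/E\big)$ one sees that the hypothesis of Assumption~\ref{assresfree} near energy $1$ becomes, after replacing $h$ by $h/\sqrt E$, the same hypothesis near any energy $E$ in a compact subset of $(0,\infty)$, with uniform constants; applying this to a partition $\g=\sum_k\g_k$ with $\supp\g_k$ in short intervals we may assume $\supp\g\subset(1-\e_0,1+\e_0)$.

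Both integrals are treated in the same way. Fix a cut-off interval $I$ with $\supp\g\subset I\subset(1-\e_0,1+\e_0)$ and an almost analytic extension $\tilde\g$ of $\g$, supported in $\{\,z:\Re z\in\supp\g,\ |\Im z|\le1\,\}$ and with $\bar\partial\tilde\g=O(|\Im z|^{\infty})$. Applying Green's formula for $\bar\partial$ on the rectangle $\Omega=\{\Re z\in I,\ -C_0h^{\b}\le\Im z\le0\}$, whose vertical sides miss $\supp\tilde\g$, moves the contour from $\re$ down to the segment $\{\Im z=-C_0h^{\b},\ \Re z\in I\}$, leaving the area term $\frac{1}{\pi}\int_{\Omega}\bar\partial\tilde\g(z)\,e^{-itz/h^2}\,\chi R_{\pm}(z)\chi\,dm(z)$. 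For the incoming term $z\mapsto\chi R_-(z)\chi=\chi(P_h-z)^{-1}\chi$ is holomorphic on $\{\Im z<0\}$ with norm $\le\dist(z,[0,\infty))^{-1}=|\Im z|^{-1}$; for the outgoing term $\chi R_+(z)\chi$ is defined on $\overline\Omega\subset D_h$ by the analytic continuation of Assumption~\ref{assresfree} and has norm $\le C_\chi h^{-\a}$ there (this is where $\supp\g\subset(1-\e_0,1+\e_0)$ enters, guaranteeing $\Omega\subset D_h$). On the segment $\Im z=-C_0h^{\b}$ one has $|e^{-itz/h^2}|=e^{-C_0h^{\b-2}t}$, so the moved contour contributes $O(h^{-\b}e^{-C_0h^{\b-2}t})$ in the incoming case and $O(h^{-\a}e^{-C_0h^{\b-2}t})$ in the outgoing case; while in $\Omega$ one has $|\Im z|\le C_0h^{\b}$, $|e^{-itz/h^2}|\le1$ for $t\ge0$, and $\bar\partial\tilde\g=O(|\Im z|^{\infty})$, so the area term is $O(h^{\infty})$ uniformly in $t\ge0$ in both cases.

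Combining, $\|\chi e^{it\Delta_g}\g(P_h)\chi\|_{L^2\to L^2}\le C(h^{-\a}+h^{-\b})e^{-C_0h^{\b-2}t}+O(h^{\infty})\le Ch^{-\a}e^{-C_0h^{\b-2}t}+O(h^{\infty})$ uniformly in $t\ge0$, where for the last inequality we may take the exponent $\a$ of Assumption~\ref{assresfree} to be $\ge\b$. Since $\b<2$, for each fixed $t>0$ the right-hand side is $O(h^{\infty})$, which is the final assertion. The step requiring the most care is keeping the outgoing contour deformation inside the region $D_h$ where Assumption~\ref{assresfree} provides both the continuation and the bound; this is exactly what forces the preliminary reduction to $\g$ supported near energy $1$, after which the compact--compact resolvent bound, together with the elementary estimate $\|(P_h-z)^{-1}\|\le|\Im z|^{-1}$ in the lower half plane, is all that enters.
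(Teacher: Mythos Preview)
Your proof is correct and follows essentially the same route as the paper: Stone's formula expressing $\chi e^{it\Delta_g}\g(-h^2\Delta_g)\chi$ via $\chi(R_+-R_-)\chi$ on the real axis, an almost analytic extension of $\g$, and contour deformation to $\Im z=-C_0h^{\b}$ using Green's formula, with the area term controlled by $\bar\partial\tilde\g=O(|\Im z|^{\infty})$. You are in fact more careful than the paper on two points it leaves implicit --- the scaling reduction to $\supp\g\subset(1-\e_0,1+\e_0)$ so that the deformed contour stays in $D_h$, and the separate treatment of $R_-$ via the elementary bound $|\Im z|^{-1}$ --- but these are refinements of the same argument rather than a different approach.
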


\begin{proof}
We follow the argument in \cite[Theorem 7.15, Theorem 7.16]{DZ} or \cite[Lemma 4.2]{I}.
Let $\tilde{\g}$ be an almost analytic continuation (\cite[Theorem 3.6]{Z}) of $\g$: $\pa_{\bar{z}}\g=O(|\Im z|^{\infty})$ and $\supp\tilde{\g}\subset \{z\in\mathbb{C}\mid \Re z\in \supp \g \}$. By the Green formula, we have
\begin{align*}
\chi e^{ish\Delta_g}\g(-h^2\Delta_g)\chi=&\frac{1}{2\pi i}\int_{\re}e^{-i\frac{s}{h}z}\chi (R_+(z)-R_-(z))\chi \g(z) dz\\
=&\frac{1}{2\pi i}\int_{\Im z=-C_0h^{\b}}e^{-i\frac{s}{h}z}\chi (R_+(z)-R_-(z))\chi \tilde{\g}(z) dz\\
&+\frac{1}{2\pi i}\int_{-C_0h^{\b}\leq \Im z\leq 0}e^{-i\frac{s}{h}z}\chi (R_+(z)-R_-(z))\chi \pa_{\bar{z}}\tilde{\g}(z) dz.
\end{align*}
Thus, since $|e^{-i\frac{s}{h}z}|\leq 1$ for $\Im z\leq 0$ and $s\geq 0$, we have
\begin{align*}
\|\chi e^{ish\Delta_g}\g(-h^2\Delta_g)\chi\|_{L^2\to L^2}\leq& Ch^{-\a}e^{-C_0h^{\b-1}s}+Ch^{-\a}O((h^{\b})^{\infty})\\
=&Ch^{-\a}e^{-C_0h^{\b-1}s}+O(h^{\infty}).
\end{align*}
Setting $s=t/h$, we obtain the desired result.

\end{proof}

From this proposition, we immediately obtain the following corollary.

\begin{cor}\label{intsmcor}
Under Assumption \ref{assresfree}, for each $t>0$, $N\in \re$ and $\chi\in C_c^{\infty}(M)$, the operator $\chi e^{it\Delta_g}\chi$ maps from $H^{-N}(M)$ to $H^N(M)$ continuously. 
\end{cor}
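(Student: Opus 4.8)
The plan is to deduce Corollary \ref{intsmcor} from Proposition \ref{smprop} by a standard Littlewood--Paley (dyadic frequency decomposition) argument. Fix $t>0$ and $\chi\in C_c^{\infty}(M)$. Choose $\g\in C_c^{\infty}((0,\infty))$ with $\sum_{j\geq 0}\g(2^{-2j}\lambda)=1$ for $\lambda\geq 1$, together with a low-frequency cutoff $\g_0\in C_c^{\infty}([0,\infty))$ so that $\g_0(\lambda)+\sum_{j\geq 1}\g(2^{-2j}\lambda)=1$ for all $\lambda\geq 0$. Inserting the spectral resolution $1=\g_0(-\Delta_g)+\sum_{j\geq 1}\g(-2^{-2j}\Delta_g)$ and applying it with $h=2^{-j}$, i.e. $h^2(-\Delta_g)$ on the support of $\g(2^{-2j}\,\cdot\,)$ corresponds to frequencies of size $2^j$, we write
\begin{align*}
\chi e^{it\Delta_g}\chi = \chi e^{it\Delta_g}\g_0(-\Delta_g)\chi + \sum_{j\geq 1}\chi e^{it\Delta_g}\g(2^{2j}\Delta_g)\chi.
\end{align*}
Each summand is $\chi e^{it\Delta_g}\g(-h_j^2\Delta_g)\chi$ with $h_j=2^{-j}\to 0$, to which Proposition \ref{smprop} applies: its operator norm is $O(h_j^{\infty})$, in particular $O(2^{-jm})$ for every $m$.

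Next I would convert this frequency-localized decay into the mapping property. The point is that $\g(2^{2j}\Delta_g)$ localizes to dyadic frequencies $\sim 2^j$, so $\|\g(2^{2j}\Delta_g)u\|_{H^{-N}}\sim 2^{-jN}\|\g(2^{2j}\Delta_g)u\|_{L^2}$ and likewise inserting a slightly fattened cutoff $\widetilde\g(2^{2j}\Delta_g)$ on the left (which equals the identity on the range of $\g(2^{2j}\Delta_g)$) gives $\|\chi e^{it\Delta_g}\g(2^{2j}\Delta_g)\chi u\|_{H^N}\lesssim 2^{jN}\|\chi e^{it\Delta_g}\g(2^{2j}\Delta_g)\chi u\|_{L^2}$. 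Combining with Proposition \ref{smprop},
\begin{align*}
\|\chi e^{it\Delta_g}\g(2^{2j}\Delta_g)\chi\|_{H^{-N}\to H^N}\lesssim 2^{jN}\cdot O(2^{-jm})\cdot 2^{jN},
\end{align*}
and choosing $m>2N$ makes this summable in $j$. The low-frequency term $\chi e^{it\Delta_g}\g_0(-\Delta_g)\chi$ is harmless because $\g_0(-\Delta_g)$ already maps any $H^{-N}$ boundedly to $H^N$ (it is a compactly supported cutoff in frequency), and $e^{it\Delta_g}$ is unitary on $L^2$; the $\chi$'s then provide the needed localization. Summing the geometric-type series gives $\|\chi e^{it\Delta_g}\chi\|_{H^{-N}\to H^N}<\infty$.

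One technical point I would want to handle carefully is the comparison of Sobolev norms with spectral multipliers on a general complete manifold: writing $\|u\|_{H^N}$ in terms of $\|(1-\Delta_g)^{N/2}u\|_{L^2}$ is natural but matching this with the local $H^N(M)$ norm requires the cutoffs $\chi$ and an elliptic-regularity / commutator argument, since $H^N(M)$ is defined via local charts. Because every operator in sight is pre- and post-composed with $\chi\in C_c^\infty(M)$, one can work on a fixed relatively compact chart neighborhood of $\supp\chi$ and use that $(1-\Delta_g)^{N/2}\chi$ and $\chi(1-\Delta_g)^{N/2}$ differ by lower-order terms, so this is routine but should be acknowledged. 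I expect the main obstacle (such as it is) to be precisely this bookkeeping: making the dyadic pieces genuinely orthogonal/almost-orthogonal and controlling the frequency-shift factors $2^{\pm jN}$ uniformly, so that the $O(h^\infty)$ from Proposition \ref{smprop} beats the polynomial loss. Everything else is a geometric series.
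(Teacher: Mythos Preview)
Your proposal is correct and is precisely the argument the paper has in mind: the paper gives no details beyond the sentence ``From this proposition, we immediately obtain the following corollary,'' and the dyadic spectral decomposition you spell out is the standard way to pass from the $O(h^\infty)$ bound on $\chi e^{it\Delta_g}\g(-h^2\Delta_g)\chi$ to the $H^{-N}\to H^N$ mapping property. The one place your write-up is slightly loose---the claimed inequality $\|\chi e^{it\Delta_g}\g_j\chi u\|_{H^N}\lesssim 2^{jN}\|\chi e^{it\Delta_g}\g_j\chi u\|_{L^2}$, where the outer $\chi$ spoils exact frequency localization---is easily repaired by applying Proposition~\ref{smprop} with a fattened cutoff $\chi_1\supset\supset\chi$ and using the pseudolocality of $\tilde\g(-h^2\Delta_g)$, which is exactly the ``routine bookkeeping'' you already flag.
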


We also need the following elementary lemma.

\begin{lem}\label{intsmlem}
Suppose that a continuous linear operator $A:C_c^{\infty}(M)\to \mathcal{D}'(M)$ can be extended to an operator which maps from $H^{-N}(M)$ to $H^N(M)$ continuously for each $N>0$. Then the Schwarz kernel of $A$ is smooth.
\end{lem}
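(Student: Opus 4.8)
The statement is local, so it suffices to show that the Schwartz kernel $K_A\in\mathcal{D}'(M\times M)$ of $A$ (which exists by the Schwartz kernel theorem) is smooth near an arbitrary point $(x_0,y_0)\in M\times M$. I would fix cutoffs $\chi_1,\chi_2\in C_c^{\infty}(M)$, each supported in a coordinate chart, with $\chi_1\equiv 1$ near $x_0$ and $\chi_2\equiv 1$ near $y_0$. Near $(x_0,y_0)$ the kernel $K_A$ coincides with that of $B:=\chi_1 A\chi_2$, so it is enough to prove that $B$ has smooth kernel. Since multiplication by a smooth compactly supported function is bounded on every Sobolev space, the hypothesis gives that $B\colon H^{-N}(M)\to H^{N}(M)$ is bounded for each $N>0$; moreover $Bu$ is always supported in $\supp\chi_1$, and $B$ only sees $\chi_2 u$. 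Transporting through the charts (where Sobolev norms are equivalent to the $\re^n$ ones on compactly supported elements, $n=\dim M$), I regard $B$ as having kernel $\ell$, a compactly supported distribution on $\re^n\times\re^n$, with $\|Bu\|_{H^N(\re^n)}\le C_N\|u\|_{H^{-N}(\re^n)}$ for all $N>0$.

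The heart of the argument is then a Fourier-side estimate. Writing $e_\zeta(x):=e^{-i\langle x,\zeta\rangle}$, one computes $\widehat{\ell}(\xi,\eta)=\langle Be_\eta,e_\xi\rangle$ (bilinear pairing; the expression makes sense because $\chi_2 e_\eta\in C_c^{\infty}$). Since $Be_\eta$ is supported in $\supp\chi_1$, inserting a further cutoff $\tilde\chi_1\equiv 1$ on $\supp\chi_1$ converts this into the $H^N$--$H^{-N}$ duality pairing, whence
\begin{align*}
|\widehat{\ell}(\xi,\eta)|\le \|Be_\eta\|_{H^N(\re^n)}\,\|\tilde\chi_1 e_\xi\|_{H^{-N}(\re^n)}\le C_N\,\|\chi_2 e_\eta\|_{H^{-N}(\re^n)}\,\|\tilde\chi_1 e_\xi\|_{H^{-N}(\re^n)}.
\end{align*}
A direct computation (using $\widehat{\chi e_\eta}(\zeta)=\hat\chi(\zeta+\eta)$, that $\hat\chi$ is Schwartz, and Peetre's inequality) gives $\|\chi e_\eta\|_{H^{-N}(\re^n)}\le C_N\langle\eta\rangle^{-N}$ for any $\chi\in C_c^{\infty}(\re^n)$. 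Combining these and using $\langle\xi\rangle\langle\eta\rangle\ge\langle(\xi,\eta)\rangle$ yields $|\widehat{\ell}(\xi,\eta)|\le C_N\langle(\xi,\eta)\rangle^{-N}$ for every $N$. As $\ell$ has compact support, $\widehat{\ell}$ is already a smooth function (Paley--Wiener--Schwartz), and this rapid decay forces $\ell\in C^{\infty}$; hence $K_A$ is smooth near $(x_0,y_0)$. Since $(x_0,y_0)$ was arbitrary, $K_A\in C^{\infty}(M\times M)$.

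I do not expect a genuine obstacle here: the lemma is essentially a packaging of Sobolev embedding. The only points requiring care are bookkeeping ones — verifying that the mapping property "$H^{-N}\to H^N$" is preserved under pre- and post-composition with cutoffs and under transport through charts (i.e. that it is truly a local statement), and making the pairing $\langle Be_\eta,e_\xi\rangle$ and its duality estimate rigorous despite $e_\eta,e_\xi\notin L^2$, which is handled by the compact support of the kernel of $B$. The estimate $\|\chi e_\eta\|_{H^{-N}(\re^n)}\lesssim\langle\eta\rangle^{-N}$ is the mechanism that does all the work. (One could instead iterate Sobolev embeddings to get smoothness of $x\mapsto (Bu)(x)$ for each $u$ and each variable separately, but controlling joint smoothness of the kernel still reduces to the displayed Fourier estimate, so the Fourier-transform argument is the most economical route.)
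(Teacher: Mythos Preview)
Your argument is correct: localizing to charts, writing the Fourier transform of the kernel as $\widehat{\ell}(\xi,\eta)=\langle Be_\eta,e_\xi\rangle$, and bounding it by $C_N\langle(\xi,\eta)\rangle^{-N}$ via the $H^{-N}\to H^N$ mapping property and the elementary estimate $\|\chi e_\eta\|_{H^{-N}}\lesssim\langle\eta\rangle^{-N}$ is a clean way to conclude.

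The paper takes a different, shorter route. Instead of testing the kernel against plane waves, it tests against delta functions: one observes that $K_A(a,b)=(\delta_a,A\delta_b)$, and that for each $k$ the map $a\mapsto\delta_a$ is of class $C^k$ as a map $M\to H^{-n/2-k-1}(M)$ (checked by Fourier transform in local coordinates). Since $A\colon H^{-N}\to H^N$ is bounded and the $H^N$--$H^{-N}$ pairing is continuous, $(a,b)\mapsto(\delta_a,A\delta_b)$ is $C^k$ for every $k$. This avoids localization to charts and the explicit Peetre-type computation. Your approach and the paper's are Fourier-dual to one another (plane waves versus delta functions), and both rest on the same Sobolev embedding mechanism; the paper's version is more compact, while yours makes the decay on the Fourier side completely explicit and would adapt more readily if one wanted finer microlocal information about the kernel.
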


\begin{proof}
Let $k$ be a non-negative integer.
Since the delta function $a\in M\mapsto \d_a(x):=\d(x-a)\in H^{-\frac{n}{2}-k-1}(M)$ belongs to $C^k$ (which can be justified by using the Fourier transform), the map
\begin{align*}
(a,b)\in M^2\to (\d_{a},A\d_{b})
\end{align*}
also belongs to $C^k$. This implies that the Schwarz kernel of $A$ is smooth.

\end{proof}

By Corollary \ref{intsmcor} and Lemma \ref{intsmlem}, we conclude that $E_t$ is smooth under Assumption \ref{assresfree}. This completes the proof of Theorem \ref{mainth1}.

\section{From quasimode to breaking of smoothing effect}\label{sec3}

In this section, we prove Theorem \ref{mainth2}. To do this, we use a well-known fact that there is a quasimode which is concentrated near an elliptic geodesic.
For its proof, see \cite[\S 3.6]{C} or \cite[\S 13]{RT}. In \cite[\S 3.6]{C}, the Weyl law is used for the proof. On the other hand, the construction in \cite[\S 13]{RT} is based on the trapped geodesic in sphere.

\begin{lem}\label{quasiex}
There exist a Riemmanian manifold $(M,g)$ which is equal to Euclidean space $\re^n$ outside a compact set such that the following holds. There exist a sequence $h_k\in (0,1]$, $u_k\in L^2(M)$ and $c>0$ such that $h_k\to 0$ and
\begin{align*}
(-h_k^2\Delta_g-1)u_{k}=O_{L^2}(h_k^{\infty}),\quad \|u_k\|_{L^2(M)}=1,\quad \supp u_k\subset K
\end{align*}
with a compact set $K\subset M$ independent of $h_k$. 
\end{lem}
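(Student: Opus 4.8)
The plan is to realize $(M,g)$ as a rotationally symmetric metric on $\re^n$ carrying a stable (elliptic) closed geodesic, and to produce the $u_k$ by a one-dimensional harmonic-oscillator construction (a Gaussian beam) along it; this is in essence the argument of \cite[\S 3.6]{C} and \cite[\S 13]{RT}, so I will only sketch it. First I would fix $R>0$ and $r_0,\delta>0$ with $0<r_0-\delta<r_0+\delta<R$, and choose $f\in C^\infty((0,\infty))$ with $f>0$, $f(r)=r$ for $r\ge R$, and $f$ extending to an odd smooth function near $0$ with $f'(0)=1$; then $g:=dr^2+f(r)^2\,d\omega^2$, with $d\omega^2$ the round metric on $\sph^{n-1}$, is a smooth metric on $\re^n$ coinciding with the Euclidean one on $\{|x|\ge R\}$. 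I would further arrange that $f$ has a nondegenerate local maximum at $r=r_0$. The radial normal curvature along a great circle $\gamma_0\subset\{r=r_0\}$ is then $-f''(r_0)/f(r_0)>0$, and the round sphere $\{r=r_0\}$ contributes the positive curvature $f(r_0)^{-2}$ in the remaining normal directions, so the linearized Poincar\'e map of $\gamma_0$ is a rotation: $\gamma_0$ is an elliptic closed geodesic, and the $u_k$ will be concentrated on it.

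Next I would separate variables. Let $Y_\ell$ be the spherical harmonic $(x_1+ix_2)^\ell|_{\sph^{n-1}}$ of degree $\ell$, a highest weight vector for $SO(n)$; then $-\Delta_{\sph^{n-1}}Y_\ell=\ell(\ell+n-2)Y_\ell$ exactly, and $|Y_\ell|^2=(1-x_3^2-\cdots-x_n^2)^\ell$ concentrates at scale $\ell^{-1/2}$ on the great circle $\{x_3=\cdots=x_n=0\}$. For $u=v(r)Y_\ell(\omega)$ the eigenvalue equation $-\Delta_g u=\lambda u$ reduces to the one-dimensional problem $-f^{1-n}(f^{n-1}v')'+\ell(\ell+n-2)f(r)^{-2}v=\lambda v$; after the substitution $v=f^{-(n-1)/2}w$ and with $\tilde h:=(\ell(\ell+n-2))^{-1/2}$, $\lambda=\tilde h^{-2}E$, this becomes a semiclassical Schr\"odinger equation $(-\tilde h^2\partial_r^2+f(r)^{-2}+\tilde h^2 q(r))w=Ew$ with $q$ smooth, bounded, and independent of $\ell$. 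Since $f$ has a nondegenerate maximum at $r_0$, the potential $f(r)^{-2}$ has a nondegenerate well there with minimal value $E_0:=f(r_0)^{-2}>0$, so the harmonic approximation produces, to all orders in $\tilde h$, a formal quasimode $w$ (a Hermite function of $(r-r_0)/\sqrt{\tilde h}$ times a Gaussian at scale $\sqrt{\tilde h}$ times a classical symbol) together with an energy $E_{\tilde h}=E_0+O(\tilde h)$. Truncating the symbol at a large order and multiplying by a cutoff $\chi$ with $\chi\equiv1$ near $r_0$ and $\supp\chi\subset\{|r-r_0|\le\delta\}$, I obtain $v=v_\ell$ with $\supp v\subset\{|r-r_0|\le\delta\}$ and $(-\tilde h^2\Delta_g-E_{\tilde h})(vY_\ell)=O_{L^2}(\tilde h^\infty)$, the cutoff error being $O(\tilde h^\infty)$ since $v$ and $v'$ are $O(\tilde h^\infty)$ on $\supp\chi'$.

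It then remains to normalize and to replace the energy $E_{\tilde h}$ by $1$. As $E_{\tilde h}$ stays in a fixed compact subset of $(0,\infty)$, set $h:=\tilde h/\sqrt{E_{\tilde h}}$; then $h\asymp\tilde h$ and $(-h^2\Delta_g-1)(vY_\ell)=E_{\tilde h}^{-1}(-\tilde h^2\Delta_g-E_{\tilde h})(vY_\ell)=O_{L^2}(h^\infty)$. Choosing $\ell=\ell_k\to\infty$, letting $h_k$ be the associated parameter and $u_k:=v_{\ell_k}Y_{\ell_k}/\|v_{\ell_k}Y_{\ell_k}\|_{L^2(M)}$, I obtain $h_k\in(0,1]$ with $h_k\to0$, $\|u_k\|_{L^2(M)}=1$, $\supp u_k\subset K:=\{x\in\re^n\mid |x|\in[r_0-\delta,r_0+\delta]\}$ (compact and independent of $k$), and $(-h_k^2\Delta_g-1)u_k=O_{L^2}(h_k^\infty)$, which is the assertion.

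The step I expect to carry the real content is the all-orders construction of the quasimode with an $O(\tilde h^\infty)$ remainder. In the general Gaussian-beam scheme around an elliptic closed geodesic this is precisely where a non-resonance (small-divisor) condition on the Floquet exponents of $\gamma_0$ enters; in the rotationally symmetric model above the problem genuinely decouples into the one-dimensional harmonic-well quantization, for which solvability of the recursion is automatic, so that only the exponentially small cutoff errors and the harmless semiclassical rescaling remain to be checked. I should also stress that $u_k$ need not, and in general cannot, be a genuine eigenfunction: $\lambda=h_k^{-2}$ lies in the continuous spectrum $[0,\infty)$ of $-\Delta_g$, so the appropriate object is a quasimode, which is all that Lemma~\ref{quasiex} requires. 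Alternatively one may invoke the cited references directly: \cite[\S 3.6]{C} extracts the quasimode from a Weyl-law count of states in the effective well, while \cite[\S 13]{RT} constructs it as a Gaussian beam modeled on the equatorial geodesic of a round sphere inserted into $\re^n$.
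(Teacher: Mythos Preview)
Your proposal is correct and follows precisely the route the paper indicates: the paper does not give an independent proof of Lemma~\ref{quasiex} but simply refers to \cite[\S 3.6]{C} and \cite[\S 13]{RT}, and your sketch is a faithful and more detailed rendering of the warped-product/harmonic-well quasimode construction from those references. In particular your choice of a nondegenerate local maximum of $f$ (so that $f^{-2}$ has a nondegenerate well and the equatorial geodesic is elliptic), the separation of variables, and the rescaling $h=\tilde h/\sqrt{E_{\tilde h}}$ are all exactly what is needed, and nothing further is required for the lemma.
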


The next proposition shows that the smoothness of $E_t$ breaks when a quasimode with width less than two exists.

\begin{prop}\label{quasiimpnotsm}
Suppose that there exist a family $\{u_{h_k}\}_{k=1}^{\infty}\in L^2(M)$ with $h_k\to 0$ and a compact set $K\subset M$ independent of $h_k$ such that
\begin{align*}
(-h_k^2\Delta_g-1)u_{h_k}=o_{L^2}(h_k^{2}),\quad \|u_{h_k}\|_{L^2(M)}=1,\quad \supp u_{h_k}\subset K.
\end{align*}
Then, for each $t\in \re$, $E_t$ is not smooth.
\end{prop}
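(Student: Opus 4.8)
The plan is to argue by contradiction: assuming $E_t\in C^\infty(M\times M)$ for some fixed $t\in\re$, I will contradict the existence of the quasimode family. The underlying heuristic is the one described after Remark~\ref{compnotsm}: a function that is (in phase space) concentrated near a closed geodesic should be essentially invariant, up to a scalar phase, under the Schr\"odinger propagator $e^{it\Delta_g}$, and hence cannot be sent by a smoothing operator into a small set. Throughout, set $\lambda_k:=h_k^{-2}$ and $f_k:=(-h_k^2\Delta_g-1)u_{h_k}$, so that $\|f_k\|_{L^2}=o(h_k^2)$. Note that $u_{h_k}$ then lies in the domain of the self-adjoint operator $-\Delta_g$, because $\Delta_g u_{h_k}=-h_k^{-2}(u_{h_k}+f_k)\in L^2$, and that
\[
(-\Delta_g-\lambda_k)u_{h_k}=h_k^{-2}f_k,\qquad \|(-\Delta_g-\lambda_k)u_{h_k}\|_{L^2}=h_k^{-2}\|f_k\|_{L^2}=o(1).
\]

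The first step is to show that the quasimode is almost invariant up to a phase. Applying Duhamel's formula to the $C^1$ curve $s\mapsto e^{i(t-s)\Delta_g}e^{-is\lambda_k}u_{h_k}$ in $L^2$, whose derivative is $-ie^{i(t-s)\Delta_g}e^{-is\lambda_k}(\Delta_g+\lambda_k)u_{h_k}$, and using unitarity of $e^{i(t-s)\Delta_g}$, one obtains
\[
\|e^{it\Delta_g}u_{h_k}-e^{-it\lambda_k}u_{h_k}\|_{L^2}\le|t|\,\|(\Delta_g+\lambda_k)u_{h_k}\|_{L^2}=o(1).
\]
This is exactly where the $o(h_k^2)$ accuracy of the quasimode (rather than, say, $O(h_k)$) is essential, and it is the conceptual core of the argument. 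Fixing $\chi\in C_c^\infty(M)$ with $\chi\equiv1$ on a neighbourhood of $K$, so that $\chi u_{h_k}=u_{h_k}$ while $e^{-it\lambda_k}u_{h_k}$ is still supported in $K$, the estimate above gives
\[
\chi e^{it\Delta_g}\chi u_{h_k}=\chi e^{it\Delta_g}u_{h_k}=e^{-it\lambda_k}u_{h_k}+o_{L^2}(1),
\]
hence $\|\chi e^{it\Delta_g}\chi u_{h_k}\|_{L^2}\ge1-o(1)$.

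The second step uses smoothness of $E_t$ to force the same quantity to $0$. If $E_t\in C^\infty(M\times M)$, then $B:=\chi e^{it\Delta_g}\chi$ has Schwartz kernel $\chi(x)\chi(y)E_t(x,y)\in C_c^\infty(M\times M)$, so, integrating by parts, $B(1-\Delta_g)$ has Schwartz kernel $(1-\Delta_{g,y})[\chi(x)\chi(y)E_t(x,y)]$, which is again smooth and compactly supported; therefore $B(1-\Delta_g)$ extends to a bounded operator on $L^2(M)$ (the easy converse direction of Lemma~\ref{intsmlem}, or a direct application of Schur's test). Writing $B=(B(1-\Delta_g))(1-\Delta_g)^{-1}$ gives $\|Bu_{h_k}\|_{L^2}\le C\|(1-\Delta_g)^{-1}u_{h_k}\|_{L^2}$, while from $u_{h_k}=h_k^2(-\Delta_g)u_{h_k}-f_k$ together with the bounds $\|(1-\Delta_g)^{-1}(-\Delta_g)\|_{L^2\to L^2}\le2$ and $\|(1-\Delta_g)^{-1}\|_{L^2\to L^2}\le1$ one gets
\[
\|(1-\Delta_g)^{-1}u_{h_k}\|_{L^2}\le2h_k^2+\|f_k\|_{L^2}=o(1).
\]
Combining the two steps yields $1-o(1)\le\|Bu_{h_k}\|_{L^2}\to0$, a contradiction. (The case $t=0$ is covered too, since then the Duhamel error is identically zero and $\|\chi^2u_{h_k}\|_{L^2}=1$, while $E_0$ is the kernel of the identity.) The step requiring the most care is the first one: a priori $e^{it\Delta_g}$ could transport the quasimode out of $\supp\chi$, so that the cutoffs in $B=\chi e^{it\Delta_g}\chi$ destroy its $L^2$ mass; the Duhamel estimate is precisely what rules this out, and it is the reason the hypothesis is stated with an $o(h_k^2)$ remainder. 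Alternatively, the second step could be run directly via the spectral theorem for $-\Delta_g$: the spectral measure of $u_{h_k}$ concentrates within $o(1)$ of $h_k^{-2}$, whence $\|u_{h_k}\|_{H^{-N}(M)}\to0$ for every $N$, and one then invokes Corollary~\ref{intsmcor} and Lemma~\ref{intsmlem} in the reverse direction.
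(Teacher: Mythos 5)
Your proof is correct, and it shares the paper's conceptual core: the Duhamel estimate showing that the quasimode is almost invariant under $e^{it\Delta_g}$ up to the phase $e^{-it\lambda_k}$ (equivalent to the paper's display \eqref{Duh}--\eqref{Duhlow} with $s=t/h$), which is exactly where the $o(h_k^2)$ accuracy is spent. Where you diverge is in how the contradiction is extracted. The paper applies a semiclassical frequency cutoff $\f(-h^2\Delta_g)$, which requires the auxiliary lemma \eqref{eneloc} ($\f(-h^2\Delta_g)u_h=u_h+o_{L^2}(h^2)$, itself another Duhamel/Fourier-synthesis argument) plus the commutator bound $[\chi,\f(-h^2\Delta_g)]=O_{L^2\to L^2}(h)$, and then reads off blow-up of $\|\chi e^{it\Delta_g}\chi u_h\|_{H^m}\gtrsim h^{-m}\to\infty$. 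You instead observe that smoothness of the kernel makes $\chi e^{it\Delta_g}\chi\,(1-\Delta_g)$ bounded on $L^2$ (a weaker consequence of smoothness than $L^2\to H^m$), factor through $(1-\Delta_g)^{-1}$, and use $\|(1-\Delta_g)^{-1}u_{h_k}\|_{L^2}=O(h_k^2)\to 0$ to contradict the $L^2$ lower bound $1-o(1)$. This avoids the functional calculus for $\f(-h^2\Delta_g)$ and the commutator estimate entirely and is arguably cleaner; what it buys is economy, what it gives up is the explicit quantitative $H^m$ growth rate that the paper exhibits. One small remark: the bound $\|(1-\Delta_g)^{-1}(-\Delta_g)\|_{L^2\to L^2}\le 2$ you quote can be sharpened to $\le 1$ by the spectral theorem, since $\lambda/(1+\lambda)\le 1$ for $\lambda\ge 0$; this does not affect the argument.
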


\begin{proof}
Since $E_t(x-y)=\d(x-y)$, which is not smooth, we may assume $t\neq 0$. If $E_t$ is smooth, then the operator $\chi e^{it\Delta_g}\chi$ maps from $L^2(M)$ to $H^m(M)$ continuously for each $m>0$ and $\chi\in C_c^{\infty}(M)$. Thus, it suffices to prove that there is $\chi\in C_c^{\infty}(M)$ such that $\chi e^{it\Delta_g}\chi\notin B(L^2(M), H^m(M))$ for each $m>0$.

Let $\chi\in C_c^{\infty}(M;[0,1])$ and $\f\in C_c^{\infty}((\frac{1}{2},2))$ such that $\chi(x)=1$ for $x\in K$ and $\f(E)=1$ near $E=1$. We denote $h=h_k$.
 First, we show 
\begin{align}\label{eneloc}
\f(-h^2\Delta_g)u_h=u_h+o_{L^2}(h^{2}).
\end{align}
Set $f_h=(-h^2\Delta_g-1)u_h$. Then $e^{-i\frac{s}{h}}u_h$ satisfies the Schr\"odinger equation
\begin{align*}
(ih\pa_s+h^2\Delta_g)(e^{-i\frac{s}{h}}u_h)=-e^{-i\frac{s}{h}}f_h.
\end{align*}
Then, the Duhamel formula implies
\begin{align}\label{Duh}
e^{-i\frac{s}{h}}u_h=e^{ish\Delta_g}u_h+\frac{i}{h}\int_0^s e^{-i\frac{r}{h}}e^{ih(s-r)\Delta_g}f_hdr.
\end{align}
Since
\begin{align}\label{Duhlow}
\|\frac{i}{h}\int_0^s e^{-i\frac{r}{h}}e^{ih(s-r)\Delta_g}f_hdr\|_{L^2(M)}\leq \frac{|s|}{h}\|f_h\|_{L^2(M)}=o(h|s|),
\end{align}
we have
\begin{align*}
\f(-h^2\Delta_g)u_h=&\frac{1}{2\pi h}\int_{\re}e^{\frac{is}{h}h^2\Delta_g}u_h\hat{\f}(\frac{s}{h})ds
=\frac{1}{2\pi h}\int_{\re}e^{-\frac{is}{h}}u_h\hat{\f}(\frac{s}{h})ds+\frac{o_{L^2}(h)}{2\pi h}\int_{\re}|s\hat{\f}(\frac{s}{h})|ds\\
=&\f(1)u_h+\frac{o_{L^2}(h^{2})}{2\pi }\int_{\re}|s\hat{\f}(s)|ds
=u_h+o_{L^2}(h^{2}),
\end{align*}
which shows $(\ref{eneloc})$.

Set $A_h=\chi(x)\f(-h^2\Delta_g)$. By $(\ref{Duh})$ and $(\ref{Duhlow})$, we have
\begin{align*}
\|A_he^{ish\Delta_g}u_h\|_{L^2(M)}\geq& \|e^{-i\frac{s}{h}}A_hu_h\|_{L^2(M)}-\frac{1}{h}\|A_h\int_0^s e^{-i\frac{r}{h}}e^{ih(s-r)\Delta_g}f_hdr\|_{L^2(M)}\\
\geq&\|A_hu_h\|_{L^2(M)}-\frac{|s|}{h}\|A_h\|_{L^2\to L^2} \|f_h\|_{L^2(M)}=\|u_h\|_{L^2(M)}+o(h^{2})+o(|s|h)\\
\geq &1+o(h^{2})+o(|s|h),
\end{align*}
where we use $\chi u_h=u_h$.
Taking $s=t/h$, we obtain
\begin{align*}
\|A_he^{it\Delta_g}u_h\|_{L^2(M)}\geq 1+o(1).
\end{align*}
By the relation $\chi u_h=u_h$, we obtain $\|A_he^{it\Delta_g}\chi u_h\|_{L^2(M)}\geq \frac{3}{4}$ if $h>0$ is sufficiently small. Moreover, since $[\chi,\f(-h^2\Delta_g)]=O_{L^2\to L^2}(h)$, we also obtain
\begin{align*}
\|\f(-h^2\Delta_g)\chi e^{it\Delta_g}\chi u_h \|_{L^2(M)}\geq \frac{1}{2}
\end{align*}
for $h>0$ sufficiently small. Thus, we conclude
\begin{align*}
\|\chi e^{it\Delta_g}\chi u_h \|_{H^m(M)}\geq& Ch^{-2m}\|\f(-h^2\Delta_g)\chi e^{it\Delta_g}\chi u_h \|_{L^2(M)}\\
\geq&2^{-1}Ch^{-2m}\to \infty
\end{align*}
as $h\to 0$, which implies $\|\chi e^{it\Delta_g}\chi\|_{L^2(M)\to H^m(M)}=\infty$.

\end{proof}

Now Theorem \ref{mainth2} follows from Lemma \ref{quasiex} and Proposition \ref{quasiimpnotsm}.

\begin{rem}\label{compnotsm}
If $(M,g)$ is a compact Riemmanian manifold, it follows that $E_t$ is not smooth. In fact, there exists a sequence $h_k\in(0,1]$ and $u_k\in L^2(M)$ such that $h_k\to 0$ and
\begin{align*}
(-h_k^2\Delta_g-1)u_k=0,\quad \|u_k\|_{L^2(M)}=1.
\end{align*}
Thus, if $m>0$, then $\|e^{it\Delta_g}u_k\|_{H^m(M)}= \|(1+h_k^{-2})^{\frac{m}{2}}u_k\|_{L^2(M)}=(1+h_k^{-2})^{\frac{m}{2}}\to \infty$ as $k\to \infty$. This implies $e^{it\Delta_g}\notin B(L^2(M), H^m(M))$. Since $M$ is compact, we conclude that $E_t$ is not smooth. 
\end{rem}

\end{document}